\documentclass{amsart}

\usepackage{amsrefs,amssymb,hyperref,tikz-cd}

\hypersetup{
    colorlinks=true,
    linkcolor=blue,
    urlcolor=teal,
    citecolor=magenta
}

\newtheorem{proposition}{Proposition}
\newtheorem{corollary}{Corollary}
\theoremstyle{definition}
\newtheorem{definition}{Definition}
\theoremstyle{remark}
\newtheorem{remark}{Remark}
\newtheorem{example}{Example}

\renewcommand\le\leqslant
\renewcommand\ge\geqslant
\newcommand\To\Rightarrow
\newcommand\id{{\mathrm{id}}}
\newcommand\ph\varphi
\newcommand\opp{^{\mathrm{op}}}
\newcommand\1{^{-1}}
\newcommand\nerve{{N_\bullet}}
\newcommand\BbbZ{{\mathbb Z}}
\newcommand\BbbC{{\mathbb C}}
\newcommand\mbfDelta{{\mathbf\Delta}}
\newcommand\mscrA{{\mathcal A}}
\newcommand\mscrC{{\mathcal C}}
\newcommand\mscrE{{\mathcal E}}
\newcommand\mscrZ{{\mathcal Z}}
\newcommand\BC{{\mathrm{BC}}}
\newcommand\Kan{{\mathrm{Kan}}}
\newcommand\nsubset{{\not\subset}}

\newcommand{\romb}{\tikz[baseline=-0.5ex]{
  \draw[scale=0.1,thick] (0,1) -- (1,0) -- (0,-1) -- (-1,0) -- cycle;
  \draw[scale=0.1,thick] (-1.1,0) -- (1.1,0);
}}

\begin{document}

\title{Beck-Chevalley Conditions in Simplicial Sets}
\author{Gaga Chakhvashvili}
\thanks{Based on the author's Master thesis, Tbilisi State University, July 2025.\\This work was supported by Shota Rustaveli National Science Foundation of Georgia (SRNSFG), grant FR-24-9660.}

\begin{abstract}
We describe certain class of simplicial sets introduced by Dmitry Skvortsov and Valentin Shehtman; we call such simplicial sets Skvortsov-Shehtman complexes. An example of a Skvortsov-Shehtman complex that is not a Kan complexes is given.
\end{abstract}

\maketitle

\section*{Introduction}

A few years ago, Dmitry Skvortsov and Valentin Shehtman associated with every first order theory a simplicial set, with $n$-simplices corresponding to $n$-types of the theory. For their research, it was important that this simplicial set satisfies certain conditions called by them the Beck-Chevalley conditions \cite{Shehtman} (see below). We call simplicial sets with this property \emph{Skvortshov-Shehtman complexes}, or SS-complexes for short.

Every Kan complex is an SS-complex. Our aim is to show that the converse is not true, i.~e. there exist SS-complexes that are not Kan complexes.

Every SS-complex satisfies the Kan conditions in dimensions $\le1$. In particular, the nerve $\nerve\BbbC$ of a small category $\BbbC$ is an SS-complex iff it is a Kan complex, i.~e. iff $\BbbC$ is a groupoid.

To find SS-complexes which are not Kan complexes we investigate the effect of Beck-Chevalley conditions on the (Duskin) nerve $\nerve\mscrC$ of a small 2-category $\mscrC$. In particular, we consider a single object single 1-morphism 2-category $\mscrZ(M)$ corresponding to a commutative monoid $M$, such that $\nerve\mscrZ(M)$ is a Kan complex iff $M$ is a group. We then describe necessary and sufficient conditions on $M$ for the nerve of $\mscrZ(M)$ to be an SS-complex.

We construct a commutative monoid $M_0$ such that $\nerve\mscrZ(M_0)$ is not an SS-complex, and another commutative monoid $M_1$ such that $\nerve\mscrZ(M_1)$ is an SS-complex but not a Kan complex.

We finish with some questions.

\section*{Beck-Chevalley conditions and Skvortsov-Shehtman fibrations}

We use the standard notation for the category $\mathbf{Set}^{\mbfDelta\opp}$ of simplicial sets, as in e.~g. \cite{GoJa} or \cite{May}. In particular, $\Delta[n]$ denotes the standard $n$-dimensional simplex $\hom_\mbfDelta(-,[0,n])$, carrier of the generic $n$-simplex $\id_{[0,n]}\in\Delta[n]_n$.

Recall that the $p$th \emph{horn} $\Lambda_p[n]\subseteq\Delta[n]$ is the union of all facets of $\Delta[n]$ except for the $p$th one, and that a simplicial map $f:E\to B$ is a \emph{Kan fibration} iff for any $0\le p\le n$, any commutative square
\[
\begin{tikzcd}
\Lambda_p[n]\ar[r]\ar[d,hook] & E \ar[d,"f"]\\
\Delta[n]\ar[r]\ar[ur,dashed] & B
\end{tikzcd}
\]
admits a filler as above, i.~e. a map $\Delta[n]\to E$ rendering both resulting triangles commutative. A \emph{Kan complex} is a simplicial set $K$ such that the unique map $K\to*$ to the terminal simplicial set $*=\Delta[0]$ is a Kan fibration, that is, the \emph{Kan conditions} $\Kan_p[n]$ are satisfied for all $n>0$ and all $0\le p\le n$: for any $n$-tuple $c_0$, ..., $c_{p-1}$, $c_{p+1}$, ..., $c_n$ of $(n-1)$-simplices of $K$ with $d_i(c_j)=d_{j-1}(c_i)$ for all $0\le i<j\le n$ with $p\notin\{i,j\}$, there exists an $n$-simplex $x$ of $K$ with $d_ix=c_i$ for all $0\le i\le n$ with $i\ne p$.

We next describe the notions introduced by Skvortsov and Shehtman.

\begin{definition}[\cite{Shehtman}]
For $0\le p<q\le n$ let the \emph{$p,q$-th rhombus} $\romb_{p,q}[n]\subseteq\Delta[n]$ be the union of the $p$th and the $q$th face of $\Delta[n]$.

Call a simplicial map $f:E\to B$ a \emph{Skvortsov-Shehtman fibration} if for all $0\le p<q\le n$, $n>1$, every commutative square
\[
\begin{tikzcd}
\romb_{p,q}[n]\ar[r]\ar[d,hook] & E \ar[d,"f"]\\
\Delta[n]\ar[r]\ar[ur,dashed] & B
\end{tikzcd}
\]
admits a filler.

Call a simplicial set $S$ a \emph{Skvortsov-Shehtman complex} (SS-complex for short) if the map $S\to*$ is a Skvortsov-Shehtman fibration.

Thus $S$ is an SS-complex iff the following \emph{Beck-Chevalley conditions} $\BC_{p,q}[n]$ are satisfied: for all $n>1$ and all $0\le p<q\le n$, for any $(n-1)$-simplices $c_p$, $c_q$ of $S$ with $d_pc_q=d_{q-1}c_p$ there exists an $n$-simplex $x$ of $S$ with $d_px=c_p$ and $d_qx=c_q$.
\end{definition}

\begin{remark}
The name ``Beck-Chevalley condition'' comes from the fact that it is equivalent to the requirement that each square of the form
\[
\begin{tikzcd}
    S_n\ar[r,"d_q"]\ar[d,"d_p"'] &S_{n-1}\ar[d,"d_p"]\\
    S_{n-1}\ar[r,"d_{q-1}"'] &S_{n-2}
\end{tikzcd}
\]
is a \emph{weak pullback} square, that is, the induced map from $S_n$ to the pullback $S_{n-1}\times_{S_{n-2}}S_{n-1}$ is onto. This condition is thus related to the conditions on bifibrations with the same name related to descent, see e.~g. \cite{Dusko}.
\end{remark}

\begin{proposition}\label{prop:kantobc}
    Every Kan fibration is a Skvortsov-Shehtman fibration; in particular, every Kan complex is an SS-complex.
\end{proposition}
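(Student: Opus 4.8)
The plan is to deduce the rhombus lifting property from the horn lifting property. Fix $0\le p<q\le n$ with $n>1$. A map $\romb_{p,q}[n]\to E$ is the same as a pair of $(n-1)$-simplices $c_p,c_q$ of $E$ with $d_{q-1}c_p=d_pc_q$, and the square asks, given $b\in B_n$ with $d_pb=fc_p$ and $d_qb=fc_q$, for an $x\in E_n$ lying over $b$ with $d_px=c_p$ and $d_qx=c_q$. Since $n>1$ there are at least three facets, so we may choose an index $r\notin\{p,q\}$; the horn $\Lambda_r[n]$, being the union of all facets except the $r$th, then contains the rhombus $\romb_{p,q}[n]$. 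In the base case $n=2$ there is nothing to do: here $\{r\}=\{0,1,2\}\setminus\{p,q\}$ is a single index and $\romb_{p,q}[2]=\Lambda_r[2]$ exactly, so the desired filler is produced directly by the horn-lifting property of the fibration $f$.

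For $n\ge3$ the rhombus is a proper subcomplex of $\Lambda_r[n]$, so the idea is to first extend the given map $\romb_{p,q}[n]\to E$ to a map $\Lambda_r[n]\to E$ lying over $b$, and then fill the resulting horn using the Kan property of $f$. Extending over the horn means producing the remaining facets $c_i$ ($i\ne p,q,r$) as $(n-1)$-simplices of $E$ lying over $d_ib$ and agreeing with the already-constructed simplices along their common facets. I would build these one facet at a time, in a carefully chosen order, so that when $c_i$ is constructed all but one of its own facets are already present; this presents $c_i$ as the filler of a horn $\Lambda_s[n-1]$ in $E$ over $d_ib$, which exists because $f$ is a Kan fibration. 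It is essential here that $f$ lifts \emph{all} horns, including outer ones, since the auxiliary horns $\Lambda_s[n-1]$ arising this way need not be inner.

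The main obstacle is the bookkeeping in this inductive face-by-face extension: one must order the facets $i\ne r$ so that each newly built $c_i$ meets enough previously built facets to determine a horn, and one must verify at every stage that all simplicial identities $d_ic_j=d_{j-1}c_i$ continue to hold, so that the facets really do glue to a map $\Lambda_r[n]\to E$ over $b$. A cleaner, if less elementary, route that sidesteps this bookkeeping is to observe that $\romb_{p,q}[n]\hookrightarrow\Delta[n]$ is anodyne: the rhombus is two $(n-1)$-simplices glued along a common $(n-2)$-face, hence contractible, so its inclusion into the contractible $\Delta[n]$ is a monomorphism and a weak homotopy equivalence, i.~e.\ a trivial cofibration, and Kan fibrations have the right lifting property against all such maps. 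The final sentence of the proposition is then immediate by taking $B=*$.
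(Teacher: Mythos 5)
Your concluding argument---that $\romb_{p,q}[n]\hookrightarrow\Delta[n]$ is a monomorphism and a weak equivalence (both rhombus and simplex being contractible), hence a trivial cofibration against which every Kan fibration lifts---is exactly the paper's proof, so the proposal is correct and takes essentially the same route. Your first, face-by-face horn-extension sketch is left incomplete (you acknowledge the ordering and simplicial-identity bookkeeping without carrying it out), but since you explicitly fall back on the model-categorical argument, the proof stands as written.
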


\begin{proof}
It is well known that each Kan fibration has the lifting property with respect to any trivial cofibration (see e.~g. \cite{GoJa}). And the inclusions $\romb_{p,q}[n]\hookrightarrow\Delta[n]$ are clearly trivial cofibrations for $n>1$, both $\romb_{p,q}[n]$ and $\Delta[n]$ being contractible.
\end{proof}

\begin{proposition}\label{prop:upto2}
The conjunction of all Kan conditions $\Kan_p[n]$ for $n\le2$ is equivalent to the conjunction of all Beck-Chevalley conditions $\BC_{p,q}[n]$ for $n\le2$. 
\end{proposition}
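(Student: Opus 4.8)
The plan is to observe that, once we strip away the conditions that are either nonexistent or vacuous, both sides reduce to the same three conditions in dimension~$2$, so the equivalence becomes a matter of matching indices.

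First I would dispose of dimension~$1$. The Beck--Chevalley conditions are only defined for $n>1$, so on the BC side the restriction $n\le2$ leaves exactly $\BC_{0,1}[2]$, $\BC_{0,2}[2]$ and $\BC_{1,2}[2]$. On the Kan side the restriction $n\le2$ also retains $\Kan_0[1]$ and $\Kan_1[1]$; these I would discard by noting that they hold in \emph{every} simplicial set. Indeed $\Kan_p[1]$ asks, for a single given vertex $c_{1-p}$ (there being no compatibility to verify), for an edge $x$ with $d_{1-p}x=c_{1-p}$, and the degenerate edge $x=s_0c_{1-p}$ works since $d_0s_0=d_1s_0=\id$. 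Thus the Kan side reduces to $\Kan_0[2]$, $\Kan_1[2]$ and $\Kan_2[2]$.

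The heart of the matter is then to identify each dimension-$2$ Beck--Chevalley condition with a dimension-$2$ Kan condition. The underlying point is combinatorial: $\Delta[2]$ has exactly three faces, so prescribing two of them (as BC does) is the same as prescribing all but the third (as Kan does). Concretely I would introduce the bijection $(p,q)\mapsto r$, where $r$ is the unique index with $\{p,q,r\}=\{0,1,2\}$, and argue that $\BC_{p,q}[2]$ and $\Kan_r[2]$ are literally the same statement. Both request, from a pair of $1$-simplices $c_p,c_q$, a $2$-simplex $x$ with $d_px=c_p$ and $d_qx=c_q$. It then remains only to match the hypotheses: the BC hypothesis is $d_pc_q=d_{q-1}c_p$, whereas the Kan hypothesis of $\Kan_r[2]$ is the conjunction of $d_ic_j=d_{j-1}c_i$ over all pairs $i<j$ with $r\notin\{i,j\}$---but the sole such pair is $\{p,q\}$, which returns exactly $d_pc_q=d_{q-1}c_p$.

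Combining these steps, the two conjunctions are term-for-term the same collection of nonvacuous conditions, and the asserted equivalence follows. I do not anticipate a real obstacle; the only step requiring care is the bookkeeping in the last paragraph, namely checking for each of the three pairs $(p,q)\in\{(0,1),(0,2),(1,2)\}$ that the unique surviving Kan compatibility equation is verbatim the Beck--Chevalley hypothesis and that the prescribed faces agree on the nose.
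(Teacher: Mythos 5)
Your proposal is correct and follows essentially the same route as the paper's own proof: dismiss dimension $1$ (the $\Kan_p[1]$ conditions being vacuously satisfiable and the BC conditions nonexistent there), then identify $\BC_{p,q}[2]$ verbatim with $\Kan_r[2]$ for the complementary index $r$. Your version merely spells out the details the paper leaves as ``straightforward''---the degenerate-edge filler $s_0c_{1-p}$ for $\Kan_p[1]$ and the check that the sole surviving Kan compatibility equation is the BC hypothesis $d_pc_q=d_{q-1}c_p$.
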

\begin{proof}
Indeed the conditions $\Kan_0[1]$ and $\Kan_1[1]$ are both trivially satisfied, while the Beck-Chevalley conditions are just absent for $n=1$. For $n=2$, it is straightforward to see that $\Kan_0[2]$ is the same as $\BC_{1,2}[2]$, $\Kan_1[2]$ is the same as $\BC_{0,2}[2]$ and $\Kan_2[2]$ is the same as $\BC_{0,1}[2]$.
\end{proof}

\begin{remark}
Note that the conditions analogous to $\Kan_p[1]$ for Kan fibrations are nontrivial in general.
\end{remark}

\section*{Nerves}

\begin{corollary}
The nerve $\nerve\BbbC$ of a small category $\BbbC$ is an SS-complex if and only if it is a Kan complex.    
\end{corollary}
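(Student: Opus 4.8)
The statement is an ``if and only if,'' and one implication is free: by Proposition~\ref{prop:kantobc} every Kan complex is an SS-complex, so if $\nerve\BbbC$ is a Kan complex it is automatically an SS-complex. The plan is therefore to concentrate on the converse, proving that if $\nerve\BbbC$ is an SS-complex then $\BbbC$ is a groupoid, whence $\nerve\BbbC$ is a Kan complex.

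The first step is a dimension reduction. An SS-complex satisfies, by definition, all the Beck-Chevalley conditions, in particular $\BC_{p,q}[2]$ for $0\le p<q\le2$. By Proposition~\ref{prop:upto2} these are equivalent to the Kan conditions $\Kan_p[2]$ for $0\le p\le2$, so I would first record that $\nerve\BbbC$ satisfies all three $2$-dimensional Kan conditions.

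Next I would translate these conditions into the language of $\BbbC$. The inner condition $\Kan_1[2]$ merely asserts that composable pairs of morphisms admit a composite, which holds in any category. The two outer conditions carry the content: specializing $\Kan_0[2]$ to a horn whose two given edges are an arbitrary morphism $m$ and an identity produces a one-sided inverse of $m$, and specializing $\Kan_2[2]$ in the same way produces the inverse on the other side. Running this for every morphism $m$ shows that every morphism of $\BbbC$ is invertible, i.~e. $\BbbC$ is a groupoid.

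Finally I would invoke the standard fact that the nerve of a groupoid is a Kan complex; together with the opening remark this closes the equivalence. I expect the only genuine work to lie in getting the source/target bookkeeping of $\Kan_0[2]$ and $\Kan_2[2]$ right so that they really extract the two one-sided inverses, and in justifying that a groupoid nerve is Kan. For the latter the cleanest route is to note that $\nerve\BbbC$ is $2$-coskeletal, so that every horn $\Lambda_p[n]$ with $n\ge4$ fills for formal reasons (the missing face is itself determined by a sphere $\partial\Delta[n-1]\to\nerve\BbbC$ that fills uniquely, and then so does the completed boundary $\partial\Delta[n]$), leaving only the dimension-$3$ fillers; these reduce to the commutativity of a single triangle, which follows from the cancellation available in a groupoid once invertibility has been established.
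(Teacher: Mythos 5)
Your proposal is correct and follows essentially the same route as the paper: Proposition~\ref{prop:kantobc} for the easy direction, Proposition~\ref{prop:upto2} to reduce the converse to the $2$-dimensional conditions, which (via the outer horns applied to identities) force $\BbbC$ to be a groupoid, whence $\nerve\BbbC$ is a Kan complex. The only differences are cosmetic: the paper observes that either one of $\BC_{0,1}[2]$, $\BC_{1,2}[2]$ alone already characterizes groupoids and simply cites the standard fact that groupoid nerves are Kan, whereas you use both outer conditions and sketch the coskeletality argument for that standard fact.
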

\begin{proof}
In one direction this follows from Proposition \ref{prop:kantobc}.

The reverse direction follows from Proposition \ref{prop:upto2}, noting that every $\nerve\BbbC$ satisfies $\BC_{0,2}[2]$, while $\nerve\BbbC$ satisfies either $\BC_{0,1}[2]$ or $\BC_{1,2}[2]$ iff $\BbbC$ is a groupoid, and then $\nerve\BbbC$ is a Kan complex.
\end{proof}

We now turn to 2-categories. Just in case, here is our notation for the horizontal and vertical composition of 2-morphisms. In
\begin{center}
\begin{tikzcd}
 C & C'
  \arrow[l, bend right=60,"f"'{name=F}] 
  \arrow[l, "g"{description, name=G}] 
  \arrow[l, bend left=60, "h"{name=H}]
  & {}
  \arrow[Rightarrow, from=F, to=G, "\alpha", shorten <=3pt, shorten >=1pt]
  \arrow[Rightarrow, from=G, to=H, "\beta", shorten >=4pt]
\end{tikzcd}\qquad 
\begin{tikzcd}
C &
C' \arrow[l, bend right=50,"f"'{name=F}] \arrow[l,bend left=50,"g"{name=G}]
&
C''
\arrow[l, bend right=50,"f'"'{name=FF}] \arrow[l,bend left=50,"g'"{name=GG}]
\arrow[Rightarrow, from=F, to=G, "\alpha", shorten <=7pt, shorten >=7pt]
\arrow[Rightarrow, from=FF, to=GG, "\alpha'", shorten <=7pt, shorten >=7pt]
\end{tikzcd} 
\end{center}
the vertical composition of $\beta$ with $\alpha$ will be denoted by $\beta\cdot\alpha$, while the horizontal composition of $\alpha$ and $\alpha'$ will be denoted by $\alpha\alpha'$; moreover for brevity horizontal compositions of the form $\id_f\alpha$ will be denoted by $f\alpha$ and those of the form $\alpha\id_f$ by $\alpha f$. Thus for example in the above diagram on the right, 
\(
\alpha\alpha'=\alpha g'\cdot f\alpha'=g\alpha'\cdot\alpha f'
\).

\begin{definition}[\cites{Street,Duskin}]\label{def:dnerve}
The \emph{Duskin nerve} of a small 2-category $\mscrC$ is the simplicial set $\nerve\mscrC$ with the $n$-simplices ($n\ge0$) given by the following data:
\begin{itemize}
\item objects $C_i$ for all $0\le i\le n$;
\item 1-morphisms $f_{ij}:C_j\to C_i$ for all $0\le i<j\le n$;
\item 2-morphisms $\alpha_{ijk}:f_{ij}f_{jk}\To f_{ik}$ for all $0\le i<j<k\le n$;
\item such that the diagrams
\[
\begin{tikzcd}
&f_{ij}f_{jk}f_{kl}
\ar[dl,Rightarrow,"\alpha_{ijk}f_{kl}"']
\ar[dr,Rightarrow,"f_{ij}\alpha_{jkl}"]\\
f_{ik}f_{kl}\ar[dr,Rightarrow,"\alpha_{ikl}"']&&f_{ij}f_{jl}\ar[dl,Rightarrow,"\alpha_{ijl}"]\\
&f_{il}
\end{tikzcd}
\]
in categories $\hom_\mscrC(C_l,C_i)$ commute for all $0\le i<j<k<l\le n$.
\end{itemize}

The $j$th face maps are given by omitting all items involving the index $j$, and the $j$th degeneracy maps by repeating $C_j$ twice and inserting identity morphisms at appropriate places, just as for nerves of 1-categories.
\end{definition}

Then
\begin{proposition}\label{prop:kanerve}
The Duskin nerve $\nerve\mscrC$ of a small 2-category $\mscrC$ is a Kan complex if and only if every 1-morphism of $\mscrC$ is invertible up to invertible 2-morphisms and every 2-morphism of $\mscrC$ is invertible.
\end{proposition}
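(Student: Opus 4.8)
The plan is to reduce the infinitely many Kan conditions to dimensions $\le3$ and then read off the two stated properties from the low-dimensional horns. The crucial structural input is that the Duskin nerve is $3$-coskeletal \cite{Duskin}: for $n\ge3$ all of the data of an $n$-simplex (the $f_{ij}$ and the $\alpha_{ijk}$) live on its $2$-faces, while the only constraints are the coherence equations indexed by its $3$-faces. Consequently, for $n\ge5$ the inclusion $\Lambda_p[n]\hookrightarrow\Delta[n]$ already contains the full $3$-skeleton, so every such horn has a unique automatic filler. For $n=4$ the horn supplies all the $f_{ij},\alpha_{ijk}$ together with four of the five coherence relations, so filling amounts to deducing the fifth from the other four; I expect this to follow by a diagram chase once the relevant $2$-cells are invertible, so that $n=4$ contributes no condition beyond those forced in lower dimensions (and is needed only for the ``if'' direction, where invertibility is available). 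It therefore suffices to analyze $\Kan_p[n]$ for $n\in\{1,2,3\}$, the case $n=1$ being vacuous since degeneracies always fill.

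For the forward direction, assume $\nerve\mscrC$ is a Kan complex. The inner $2$-horn $\Kan_1[2]$ is automatic (compose, with identity $2$-cell), but the outer ones carry content: $\Kan_2[2]$, applied to a $1$-morphism $f\colon C\to D$ placed as $f_{12}$ with $C_0=C$ and $f_{02}=\id$, produces $g\colon D\to C$ and a $2$-cell $gf\To\id$, while $\Kan_0[2]$, with $f$ placed as $f_{01}$ and $f_{02}=\id$, produces $h\colon D\to C$ and a $2$-cell $fh\To\id$ — i.e. one-sided inverses up to a priori non-invertible $2$-cells. The inner $3$-horns then force $2$-cells to be invertible: filling $\Lambda_1[3]$ means solving, for the missing $2$-cell $\alpha_{023}$, the single (strict) coherence equation
\[
\alpha_{023}\cdot(\alpha_{012}f_{23})=\alpha_{013}\cdot(f_{01}\alpha_{123}),
\]
and solvability for all horn data is equivalent to the whiskered cell $\alpha_{012}f_{23}$ being invertible; specializing to a degenerate edge $f_{23}=\id$ (and $f_{12}=\id$ to expose an arbitrary $\beta\colon u\To v$ as $\alpha_{012}$) shows every $2$-morphism is invertible. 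Once all $2$-cells are invertible, the one-sided inverses above upgrade to a genuine equivalence via $g\cong g(fh)\cong(gf)h\cong h$, so every $1$-morphism is invertible up to invertible $2$-cells.

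For the reverse direction, assume every $2$-cell is invertible and every $1$-morphism is an equivalence. Then the inner $2$-horn fills by composition, and the outer $2$-horns fill by whiskering a chosen pseudo-inverse. The inner and outer $3$-horns fill by solving the displayed coherence equation for the unique missing $2$-cell, which is legitimate precisely because the whiskered cells $\alpha_{012}f_{23}$ and $f_{01}\alpha_{123}$ are now invertible (whiskering an invertible $2$-cell by any $1$-morphism stays invertible, and whiskering by an equivalence reflects invertibility). The case $n=4$ fills by the coherence dependency noted above, and $n\ge5$ is automatic by $3$-coskeletality; hence $\nerve\mscrC$ is a Kan complex.

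The main obstacle I anticipate is the dimension-$3$ bookkeeping. Concretely I must (i) argue that invertibility transfers between a bare $2$-cell and its whiskerings, using that the ambient $1$-morphisms are equivalences, so that solvability of the coherence equation is genuinely equivalent to invertibility of all $2$-morphisms rather than of some whiskered variant; and (ii) choose horn data — typically built from degenerate objects and identity $1$-morphisms — that isolates one arbitrary $2$-morphism as the sole obstruction to filling. The $n=4$ coherence dependency likewise deserves explicit verification, although, since it enters only in the ``if'' direction, the full strength of invertibility may be used there.
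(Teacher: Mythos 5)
Your strategy is sound and, if completed, would give a fully self-contained proof of both directions --- something the paper itself does not attempt. The paper's proof establishes only the ``only if'' direction: from an arbitrary 2-morphism $\alpha\colon f\To g$ it builds one explicit outer horn $\Lambda_0[3]\to\nerve\mscrC$ (with two degenerate edges, very much like your specialization) whose filler forces a \emph{right} inverse $\beta$ with $\alpha\cdot\beta=\id_g$, and it reads off one-sided quasi-inverses of 1-morphisms from $\Kan_0[2]$ and $\Kan_2[2]$; the converse (Duskin's theorem that the nerve of a bigroupoid is Kan) is deferred to the cited literature. Your forward direction is the same argument up to inessential choices --- you use the inner horn $\Lambda_1[3]$ and extract left inverses instead of right ones --- and your upgrade of the two one-sided inverses to a genuine pseudo-inverse via $g\cong g(fh)\cong(gf)h\cong h$ is in fact spelled out more carefully than in the paper. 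The 3-coskeletality reduction ($n\ge5$ automatic, $n=4$ a coherence dependency, $n\le3$ the real content) is the correct skeleton for the converse.

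However, two steps of your ``if'' direction are asserted rather than proved, and you flag them yourself; both are load-bearing. (i) The $n=4$ case: you must show that four of the five tetrahedron equations imply the fifth. This is true, but it needs the chase to be carried out. Writing $E_{ijkl}$ for $\alpha_{ijl}\cdot f_{ij}\alpha_{jkl}=\alpha_{ikl}\cdot\alpha_{ijk}f_{kl}$, for an inner horn such as $\Lambda_2[4]$ one composes the missing equation with the invertible cell $f_{01}\alpha_{123}f_{34}$, then uses functoriality of whiskering, the four given equations, and the interchange identity $\alpha_{012}f_{24}\cdot(f_{01}f_{12})\alpha_{234}=f_{02}\alpha_{234}\cdot\alpha_{012}(f_{23}f_{34})$, and finally cancels the invertible whiskered cell; only 2-cell invertibility is used. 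For the outer horns $\Lambda_0[4]$ and $\Lambda_4[4]$ the missing equation lives in a different hom-category than the four given ones, so one must first whisker it by $f_{01}$ (resp.\ $f_{34}$), run the same chase, and then cancel that whiskering --- this requires \emph{faithfulness} of whiskering by an equivalence, an ingredient absent from your outline. (ii) For the outer 3-horns the missing cell occurs only inside a whisker (e.g.\ $f_{01}\alpha_{123}$ for $\Lambda_0[3]$), so ``solving the coherence equation'' means producing $X$ with $f_{01}X=\delta$ for a prescribed $\delta$; the property needed is \emph{fullness} of the functor $f_{01}\circ(-)$, which holds because $f_{01}$ is an equivalence, and is not the property ``whiskering by an equivalence reflects invertibility'' that you invoke. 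Both repairs are routine (whiskering by an equivalence is an equivalence of hom-categories, hence full and faithful), but until they are made, the converse direction is an outline rather than a proof.
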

\begin{proof}
Let us begin with 2-morphisms. Given $\alpha:f\To g$ with $f,g:C'\to C$, let $C_0=C_1=C$, $C_2=C_3=C'$, $f_{01}=\id_C$, $f_{23}=\id_{C'}$, $f_{13}=f$, $f_{12}=f_{02}=f_{03}=g$, $\alpha_{013}=\alpha$, $\alpha_{012}=\alpha_{023}=\id_g$. Then it is easy to check that these define a horn $\Lambda_0[3]\to\nerve\mscrC$ such that its filler requires a 2-morphism $\beta:g\To f$ with $\alpha\cdot\beta=\id_g$. It follows that every 2-morphism must have a right inverse, which implies that every 2-morphism is an isomorphism.

Now for 1-morphisms the conditions $\Kan_0[2]$ and $\Kan_2[2]$ imply that for any $f:C'\to C$ there is a $g:C\to C'$ and 2-morphisms $fg\To\id_C$, $gf\To\id_{C'}$.
\end{proof}

On the other hand we have
\begin{proposition}\label{prop:catbc}
    The Duskin nerve $\nerve\mscrC$ of a small 2-category $\mscrC$ satisfies the condition $\BC_{pq}[n]$ ($0\le p<q\le n$, $n>1$) iff for any objects $C_i$, $0\le i\le n$, any morphisms $f_{ij}:C_j\to C_i$, $0\le i<j\le n$ and any 2-morphisms $\alpha_{ijk}:f_{ij}f_{jk}\To f_{ik}$ for $0\le i<j<k\le n$ with $\{p,q\}\nsubset\{i,j,k\}$ such that the appropriate diagrams from Definition \ref{def:dnerve} commute, there exist 2-morphisms
\begin{itemize}
\item $\lambda_{ipq}:f_{ip}f_{pq}\To f_{iq}$, $0\le i<p$,
\item $\mu_{pjq}:f_{pj}f_{jq}\To f_{pq}$, $p<j<q$,
\item $\rho_{pqk}:f_{pq}f_{qk}\To f_{pk}$, $q<k\le n$
\end{itemize}
making the remaining diagrams of the above type commutative, i.~e. satisfying the equations
\begin{enumerate}
\item $\lambda_{ipq}\cdot\alpha_{ijp}f_{pq}=\alpha_{ijq}\cdot f_{ij}\lambda_{jpq}$ for all $0\le i<j<p$;
\item $\mu_{pkq}\cdot\alpha_{pjk}f_{kq}=\mu_{pjq}\cdot f_{pj}\alpha_{jkq}$ for all $p<j<k<q$;
\item $\alpha_{pkl}\cdot\rho_{pqk}f_{kl}=\rho_{pql}\cdot f_{pq}\alpha_{qkl}$ for all $q<k<l\le n$;
\item $\lambda_{ipq}\cdot f_{ip}\mu_{pkq}=\alpha_{ikq}\cdot\alpha_{ipk}f_{kq}$ for all $0\le i<p<k<q$;
\item $\alpha_{iql}\cdot\lambda_{ipq}f_{ql}=\alpha_{ipl}\cdot f_{ip}\rho_{pql}$ for all $0\le i<p<q<l\le n$;
\item $\rho_{pql}\cdot\mu_{pjq}f_{ql}=\alpha_{pjl}\cdot f_{pj}\alpha_{jql}$ for all $p<j<q<l\le n$.
\end{enumerate}
\end{proposition}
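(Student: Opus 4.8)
The plan is to unwind $\BC_{pq}[n]$ directly from the combinatorial description of the Duskin nerve in Definition \ref{def:dnerve}, turning the rhombus-filling problem into the problem of completing a prescribed system of 2-morphisms. Since the target of $\nerve\mscrC\to*$ is terminal, $\BC_{pq}[n]$ says exactly that every map $\romb_{p,q}[n]\to\nerve\mscrC$ extends along $\romb_{p,q}[n]\hookrightarrow\Delta[n]$ to an $n$-simplex. So first I would read off, in the language of Definition \ref{def:dnerve}, what a map out of the rhombus records and what an extension to $\Delta[n]$ adjoins.

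A map $\romb_{p,q}[n]\to\nerve\mscrC$ is a pair $(c_p,c_q)$ of $(n-1)$-simplices, the $p$-th and $q$-th faces, agreeing on their common codimension-2 face via $d_pc_q=d_{q-1}c_p$. This agreement means the two faces assemble into one coherent family: objects $C_i$ for $0\le i\le n$, 1-morphisms $f_{ij}$ for $\{i,j\}\ne\{p,q\}$, and 2-morphisms $\alpha_{ijk}$ for $\{p,q\}\nsubset\{i,j,k\}$. Indeed a triple $\{i,j,k\}$ lies in the face $d_p$ (resp.\ $d_q$) precisely when $p$ (resp.\ $q$) is absent, so jointly the two faces carry exactly those $\alpha_{ijk}$ omitting $p$ or $q$, and every $f_{ij}$ save $f_{pq}$. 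That $c_p$ and $c_q$ are genuine simplices is the commutativity of the triangle of Definition \ref{def:dnerve} for every tetrahedron $\{i,j,k,l\}$ missing $p$ or missing $q$, i.e.\ for every $\{i,j,k,l\}$ with $\{p,q\}\nsubset\{i,j,k,l\}$; each 3-element face of such a tetrahedron again omits $p$ or $q$, so only the prescribed $\alpha$'s occur. These are the appropriate diagrams assumed in the statement.

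An extension of $(c_p,c_q)$ to an $n$-simplex adjoins precisely the data the rhombus omitted: the morphism $f_{pq}$ and the 2-morphisms $\alpha_{ijk}$ with $\{p,q\}\subseteq\{i,j,k\}$. As $f_{pq}$ is already listed among the $f_{ij}$ prescribed in the statement, relative to the data as given an extension is exactly a choice of these remaining 2-morphisms. Writing such a triple as $\{p,q\}$ together with a third index $m$ and sorting by the position of $m$, I would identify the three families with those of the statement: $m<p$ yields $\lambda_{ipq}\colon f_{ip}f_{pq}\To f_{iq}$, $p<m<q$ yields $\mu_{pjq}\colon f_{pj}f_{jq}\To f_{pq}$, and $q<m$ yields $\rho_{pqk}\colon f_{pq}f_{qk}\To f_{pk}$. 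Being an $n$-simplex then imposes the coherence of Definition \ref{def:dnerve} on exactly the tetrahedra the faces did not already control, namely those $\{i,j,k,l\}$ with $\{p,q\}\subseteq\{i,j,k,l\}$. Such a tetrahedron is $\{p,q\}$ together with two further indices, and sorting these into the slots below $p$, between $p$ and $q$, and above $q$ yields six patterns; instantiating the single triangle identity $\alpha_{ikl}\cdot\alpha_{ijk}f_{kl}=\alpha_{ijl}\cdot f_{ij}\alpha_{jkl}$ at each pattern produces equations (1)--(6) in order (both below $p$ gives (1), both between gives (2), both above $q$ gives (3), and the three mixed patterns give (4), (5), (6)), which is the asserted equivalence.

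The hard part is purely the bookkeeping in this last matching. One must track the reindexing built into the face maps, in particular the shift in $d_{q-1}c_p$, and check, for each of the six positional patterns, that the two diagonal faces of the tetrahedron are the claimed $\lambda$, $\mu$, or $\rho$ while the other two faces are among the prescribed $\alpha$'s, and that the whiskerings $\alpha_{ijk}f_{kl}$ versus $f_{ij}\alpha_{jkl}$ fall on the correct side with the correct 1-morphism whiskered. Once the dictionary between the six patterns and equations (1)--(6) is fixed and all sources and targets are seen to agree on the nose, the equivalence between $\BC_{pq}[n]$ and the existence of $\lambda$, $\mu$, $\rho$ satisfying (1)--(6) is immediate.
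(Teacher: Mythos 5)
The paper states Proposition~\ref{prop:catbc} with no proof at all --- it is treated as a direct unwinding of Definition~\ref{def:dnerve} --- and your proposal supplies exactly the unwinding that is implicitly intended. Your accounting is right: a map $\romb_{p,q}[n]\to\nerve\mscrC$ (equivalently, the pair $c_p,c_q$ with $d_pc_q=d_{q-1}c_p$ from the paper's elementwise formulation of $\BC_{p,q}[n]$) amounts to the objects $C_i$, the 1-morphisms $f_{ij}$ for $\{i,j\}\ne\{p,q\}$, and the 2-morphisms $\alpha_{ijk}$ with $\{p,q\}\nsubset\{i,j,k\}$, coherent on all tetrahedra not containing both $p$ and $q$; a filler adjoins the remaining data, coherent on the tetrahedra containing both. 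Your pattern-to-equation dictionary also checks out: each relevant tetrahedron is $\{p,q\}$ plus two indices $m_1<m_2$, its two faces containing both $p$ and $q$ are the unknowns and the other two are given, and instantiating $\alpha_{ikl}\cdot\alpha_{ijk}f_{kl}=\alpha_{ijl}\cdot f_{ij}\alpha_{jkl}$ at $(i,j,p,q)$, $(p,j,k,q)$, $(p,q,k,l)$, $(i,p,k,q)$, $(i,p,q,l)$, $(p,j,q,l)$ yields precisely equations (1)--(6) in that order, sources and targets matching on the nose.

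One point you bridge silently should instead be made explicit: your sentence ``as $f_{pq}$ is already listed among the $f_{ij}$ prescribed in the statement, relative to the data as given an extension is exactly a choice of these remaining 2-morphisms.'' The rhombus does \emph{not} carry $f_{pq}$, so a filler consists of a choice of $f_{pq}$ \emph{together with} $\lambda,\mu,\rho$; hence in the proposition $f_{pq}$ must be read as existentially quantified alongside $\lambda,\mu,\rho$, whereas the statement as printed (which you follow) places it under the universal quantifier. Read literally with $f_{pq}$ universal, the equivalence is false: take the one-object 2-category with 1-morphisms a nontrivial group $G$ and only identity 2-morphisms, whose Duskin nerve is the Kan complex $\nerve G$, so every $\BC_{p,q}[n]$ holds; yet prescribing $f_{01}$, $f_{12}$, $f_{02}$ with $f_{01}f_{12}\ne f_{02}$ leaves no possible $\alpha_{012}$. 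Your argument, carried out as you describe it, proves the corrected existential version --- which is also the only version the paper uses, since for $K(M,2)$ there is a unique 1-morphism and the two quantifications coincide --- so the fix costs you one sentence, but as written the proposal inherits the statement's quantifier slip without flagging it.
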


\section*{2-categories from monoids}

Let us consider Duskin nerves for single object 2-categories. 

Let us further restrict to the case of the single object 2-subcategories of the 2-category of small categories, that single object being itself a single object category. If we view the latter as a monoid $M$, we may describe
the resulting 2-category $\mscrE(M)$ as follows. It has a single object, its 1-morphisms are endomorphisms of $M$, and for two such endomorphisms $f$, $g$ the 2-morphisms, i.~e. natural transformations $f\To g$,
are in one-to-one correspondence with elements $a\in M$ such that the equality
\begin{equation}\label{eq:afga}\tag{$*$}
af(m)=g(m)a
\end{equation}
holds for any $m\in M$. Identity 2-morphisms are given by the unit of $M$ and vertical composition by the multiplication in $M$. As for the horizontal composition, it is given by $fa=f(a)$ and $af=a$.

Since we are interested in the Beck-Chevalley conditions, in view of Proposition \ref{prop:upto2} we further restrict to the 2-subcategory of $\mscrE(M)$ with 1-morphisms self-equivalences only. Then as it happens, this actually means to restrict further to automorphisms. Indeed we have
\begin{proposition}
Every self-equivalence of a category with single object is an automorphism.
\end{proposition}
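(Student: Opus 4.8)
The plan is to translate everything into the language of monoids and then observe that, for a single-object category, the categorical notion of equivalence collapses to bijectivity on the unique hom-monoid. First I would recall that a category $\mscrC$ with a single object $*$ is the same datum as the monoid $M=\hom_{\mscrC}(*,*)$, composition being multiplication; a functor $F\colon\mscrC\to\mscrC$ necessarily fixes $*$ and is therefore nothing but a monoid endomorphism $f\colon M\to M$, while an \emph{automorphism} of $\mscrC$ is exactly a monoid automorphism, i.e.\ a bijective such $f$ (whose set\nobreakdash-theoretic inverse is then automatically a monoid homomorphism). Thus the statement reduces to a single implication: if $f$ is a self\nobreakdash-equivalence, then $f$ is bijective.

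For the main step I would invoke the characterisation of an equivalence of categories as a functor that is fully faithful and essentially surjective. Essential surjectivity is automatically satisfied, since $\mscrC$ has the single object $*$ and $F(*)=*$. Full faithfulness asks that the induced map on each hom-set be a bijection; but there is only one hom-set, namely $\hom_{\mscrC}(*,*)=M$, and the induced map is $f$ itself. Hence $f$ is a bijection, and a bijective monoid homomorphism is an automorphism, which is exactly what is claimed.

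Should one prefer to sidestep the (choice\nobreakdash-dependent) equivalence between the two formulations of ``equivalence'', I would instead run the argument directly from a pseudo-inverse: a self\nobreakdash-equivalence $f$ comes equipped with a functor $g\colon M\to M$ (again a monoid endomorphism) and natural isomorphisms $fg\cong\id_M$, $gf\cong\id_M$. Unwinding the description of the $2$-morphisms of $\mscrE(M)$ above, such a natural isomorphism is an invertible element $a\in M$ with $f(g(m))=a\1 m a$ for all $m$, so that $fg$ is the bijection $m\mapsto a\1 m a$, and likewise $gf$ is a bijection. From $fg$ bijective one reads off that $f$ is surjective, and from $gf$ bijective that $f$ is injective, whence $f$ is bijective. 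I do not anticipate a genuine obstacle here; the only points deserving a word of care are that one must use \emph{both} composites to conclude bijectivity, and that the inverse bijection is again a homomorphism, so that the resulting automorphism is strict.
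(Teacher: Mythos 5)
Your proposal is correct; both of your arguments establish the claim, but they take a genuinely different route from the paper's proof. The paper works entirely inside the monoid-theoretic description: starting from the pseudo-inverse data (an endomorphism $g$ and invertible elements $u,v\in M$ with $ufg(m)=mu$ and $vgf(m)=mv$ for all $m$), it \emph{constructs} a strict inverse explicitly, namely $\tilde f(m)=vg(m)v\1=g(umu\1)$, and verifies $\tilde ff=\id=f\tilde f$ by direct computation; the point there is that the pseudo-inverse $g$ itself need not invert $f$ strictly, but a suitable conjugate of it does. Your first argument instead imports the standard theorem that an equivalence of categories is fully faithful, so that $f$, being the induced map on the unique hom-set $\hom(*,*)=M$, is a bijection; your second argument starts from the same data as the paper but concludes abstractly, reading surjectivity of $f$ off the bijection $fg$ and injectivity off the bijection $gf$, without ever writing down the inverse. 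Both are sound, and you correctly flag the point that the set-theoretic inverse of a bijective monoid homomorphism is again a homomorphism, which is what makes the resulting automorphism strict. Two small remarks. First, your hedge about the axiom of choice is unnecessary: the implication you actually use (equivalence $\Rightarrow$ fully faithful and essentially surjective) is the choice-free direction; choice enters only in the converse, which you never need. Second, what the paper's longer computation buys is an explicit formula for $f\1$ as a conjugate of the pseudo-inverse $g$, whereas your arguments deliver bijectivity without exhibiting the inverse; for the purposes of this Proposition nothing more is required, so your shorter route is perfectly adequate.
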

\begin{proof}
Under our description, an endomorphism $f:M\to M$ of a monoid $M$ represents a self-equivalence of the corresponding single object category if and only if there is another endomorphism $g:M\to M$ and invertible elements $u,v\in M$ satisfying
\[
ufg(m)=mu
\]
and
\[
vgf(m)=mv
\]
for all $m\in M$.

We thus have $fg(m)=u\1mu$ and $gf(m)=v\1mv$, hence
\[
vgf(m)v\1=m=fg(umu\1)
\]
for all $m\in M$. Let then $\tilde f:M\to M$ be the endomorphism given by
\[
\tilde f(m)=vgfg(umu\1)v\1=vg(m)v\1=g(umu\1).
\]
Then
\[
\tilde ff(m)=vgf(m)v\1=m=fg(umu\1)=f\tilde f(m)
\]
so that $f$ is an automorphism with inverse $\tilde f$.
\end{proof}

Taking into account that for automorphisms $f,g$ the condition \eqref{eq:afga} only depends on $fg\1$ we thus arrive at the following
\begin{definition}\label{def:2aut}
For a monoild $M$, $\mscrA(M)$ is the single object 2-category with 1-morphisms automorphisms of $M$, and 2-morphisms $f\To g$ pairs $(a,f)$ where $a\in M$ satisfies
\[
ma=af(g\1(m))
\]
for every $m\in M$. Composition of 1-morphisms is composition of automorphisms of $M$, vertical composition of 2-morphisms is given by $(b,g)\cdot(a,f)=(ba,f)$ and horizontal composition is determined by $f'(a,f)=(f'(a),f'f)$ and $(a,f)f'=(a,ff')$.
\end{definition}

We are thus led naturally to the question what conditions on $M$ are equivalent to $\nerve\mscrA(M)$ being an SS-complex.

As it happens, for our examples it will suffice to restrict even further and consider the 2-subcategory $\mscrZ(M)$ of $\mscrA(M)$ containing only single 1-morphism, namely the identity automorphism of $M$. It follows from the Definition \ref{def:2aut} that $\mscrZ(M)$ only depends on the center of $M$, hence we can as well assume from now on that $M$ is commutative. We then obtain

\begin{definition}
For a commutative monoid $M$, the simplicial set $K(M,2)$ is the Duskin nerve $\nerve\mscrZ(M)$ of the 2-category $\mscrZ(M)$. Thus its $n$-simplices are families $(a_{ijk})_{0\le i<j<k\le n}$ of elements of $M$ satisfying the equalities
\[
a_{ikl}a_{ijk}=a_{ijl}a_{jkl}
\]
for all $0\le i<j<k<l\le n$.
\end{definition}

Applying Proposition \ref{prop:catbc} to this particular case we obtain
\begin{corollary}
For a commutative monoid $M$, the simplicial set $K(M,2)$ satisfies $\BC_{pq}[n]$ ($0\le p<q\le n$, $n>1$) iff for any $a_{ijk}\in M$ with $0\le i<j<k\le n$, $\{p,q\}\nsubset\{i,j,k\}$ that for any $0\le i<j<k<l\le n$, $\{p,q\}\nsubset\{i,j,k,l\}$ satisfy
\[
a_{ikl}a_{ijk}=a_{ijl}a_{jkl}
\]
there exist elements $x_{ipq}$, $y_{pjq}$, $z_{pqk}$ of $M$ with $0\le i<p$, $p<j<q$, $q<k\le n$ satisfying
\begin{enumerate}
\item $x_{ipq}a_{ijp}=a_{ijq}x_{jpq}$ for all $0\le i<j<p$;
\item $y_{pkq}a_{pjk}=y_{pjq}a_{jkq}$ for all $p<j<k<q$;
\item $a_{pkl}z_{pqk}=z_{pql}a_{qkl}$ for all $q<k<l\le n$;
\item $x_{ipq}y_{pjq}=a_{ijq}a_{ipj}$ for all $0\le i<p<j<q$;
\item $a_{iqk}x_{ipq}=a_{ipk}z_{pqk}$ for all $0\le i<p<q<k\le n$;
\item $z_{pqk}y_{pjq}=a_{pjk}a_{jqk}$ for all $p<j<q<k\le n$.
\end{enumerate}
\end{corollary}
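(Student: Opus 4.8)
The plan is to specialize Proposition \ref{prop:catbc} to the 2-category $\mscrZ(M)$, reading off what each piece of data and each of the six coherence equations becomes in this very degenerate case. First I would record the shape of an $n$-simplex of $K(M,2)=\nerve\mscrZ(M)$. Since $\mscrZ(M)$ has a single object and its only 1-morphism is the identity automorphism, all the objects $C_i$ and all the 1-morphisms $f_{ij}$ of Definition \ref{def:dnerve} are forced; and by commutativity of $M$ the defining condition $ma=af(g^{-1}(m))$ of a 2-morphism $\id\To\id$ in Definition \ref{def:2aut} becomes $ma=am$, which holds for every $a\in M$. Hence the 2-morphisms $\alpha_{ijk}$ are exactly arbitrary elements $a_{ijk}\in M$, and the whole task reduces to translating the structure maps.

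Next I would translate the two kinds of composition. Horizontal composition by an identity 1-morphism acts trivially: from Definition \ref{def:2aut}, $\id(a,\id)=(a,\id)$ and $(a,\id)\id=(a,\id)$, so every horizontal factor $\alpha f$ or $f\alpha$ occurring in the equations of Proposition \ref{prop:catbc} collapses to the underlying element of $M$. Vertical composition $(b,\id)\cdot(a,\id)=(ba,\id)$ becomes multiplication in $M$, and since $M$ is commutative the order of the two factors is immaterial. Applying these dictionaries to the coherence diagram of Definition \ref{def:dnerve} already recovers the stated relation $a_{ikl}a_{ijk}=a_{ijl}a_{jkl}$ defining the simplices, so that the hypothesis data of the corollary matches the hypothesis data of Proposition \ref{prop:catbc}.

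With this in hand the remaining step is purely mechanical: I would set $\lambda_{ipq}=x_{ipq}$, $\mu_{pjq}=y_{pjq}$, $\rho_{pqk}=z_{pqk}$ and substitute into each of the six equations (1)--(6) of Proposition \ref{prop:catbc}, collapsing every horizontal composite to its underlying element and every vertical composite to a product in $M$. Each equation then reduces to exactly the correspondingly numbered equation in the statement. The main---and essentially only---obstacle is the index bookkeeping: equations (4)--(6) of Proposition \ref{prop:catbc} mix the new 2-morphisms with the old $\alpha_{ijk}$ and use a bound variable that the corollary renames, so one must match the index ranges and check that the trivial horizontal compositions are applied on the correct side. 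There is no substantive mathematical difficulty, as commutativity removes any concern about the order of factors.
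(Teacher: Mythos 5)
Your proposal is correct and is exactly the argument the paper intends: the corollary is stated as a direct specialization of Proposition~\ref{prop:catbc} to $\mscrZ(M)$ (the paper gives no separate proof beyond ``applying Proposition~\ref{prop:catbc} to this particular case''), and your translation dictionary---forced objects and 1-morphisms, 2-morphisms as arbitrary elements of $M$ by commutativity, horizontal composition with $\id$ trivial, vertical composition as multiplication, plus the renaming of bound indices in equations (4)--(6)---is precisely the required bookkeeping.
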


Using this description we may now give examples showing that not every $K(M,2)$ is an SS-complex and that there is one which is an SS-complex but not a Kan complex.

\begin{example}
Let $M$ be the monoid $\{0,1\}$ under multiplication. To test the condition $\BC_{0,3}[5]$ consider the family $a_{ijk}$ with $0\le i<j<k\le5$ with $\{0,3\}\nsubset\{i,j,k\}$ where $a_{015}=a_{024}=a_{135}=a_{234}=1$ and all the remaining $a_{ijk}$ zero. Then it is easy to check that in the equalities to be satisfied by these $a_{ijk}$ no more than one of the terms is equal to $1$, so that they are all satisfied. Then for $\BC_{0,3}[5]$ to hold there must in particular exist solutions to
\begin{align*}
z_{035}y_{013}&=1\\
z_{034}y_{023}&=1\\
z_{034}y_{013}&=0\\
z_{035}y_{023}&=0.
\end{align*}
To satisfy the first two of these equations one must have $z_{035}=y_{013}=z_{034}=y_{023}=1$, and then the remaining two equations cannot be satisfied. Thus $K(M,2)$ is not an SS-complex.
\end{example}

\begin{example}
Let now $\BbbZ_+$ be the monoid of nonnegative integers under addition. Then $K(\BbbZ_+,2)$ is not a Kan complex by Proposition \ref{prop:kanerve}.
Let us show that it is an SS-complex.
    
Note that $\BbbZ_+$ embeds in the group of integers $\BbbZ$, and $K(\BbbZ,2)$ is a Kan complex, hence an SS-complex by Proposition \ref{prop:kantobc}. To check the condition $\BC_{pq}[n]$, given $a_{ijk}\in\BbbZ_+$ for $\{p,q\}\nsubset\{i,j,k\}$ satisfying the required equalities, we can find the needed elements $x_{ipq}$, $y_{pjq}$, $z_{pqk}$ satisfying equations (1) -...- (6) in $\BbbZ$. Then note that the equations (4) imply $y_{pjq}\ge-x_{ipq}$ and the equations (6) imply that $y_{pjq}\ge-z_{pqk}$. Hence there is an $A\in\BbbZ$ such that $A\ge-x_{ipq}$, $A\ge-z_{pqk}$ and $A\le y_{pjq}$. In other words the numbers
\begin{align*}
\tilde x_{ipq}&=x_{ipq}+A\\
\tilde y_{pjq}&=y_{pjq}-A\\
\tilde z_{pqk}&=z_{pqk}+A
\end{align*}
are all in $\BbbZ_+$. And it is easy to check that these also satisfy equations (1) -...- (6).
\end{example} 

\section*{Remaining questions}

Is there a nontrivial model structure on simplicial sets with fibrations including all Skvortsov-Shehtman fibrations?

For a commutative monoid $M$, is the condition that $K(M,2)$ is an SS-complex equivalent to embeddability of $M$ into a group, i.~e. cancellability of $M$? If not, what is it?

More generally, can one characterize monoids $M$ for which $\nerve\mscrA(M)$ from Definition \ref{def:2aut} is an SS-complex?

\begin{bibdiv}
\begin{biblist}

\bib{Duskin}{article}{
    author={Duskin, John W.},
    title={Simplicial matrices and the nerves of weak \(n\)-categories I: Nerves of bicategories},
    journal={Theory and Applications of Categories},
    volume={9},
    date={2001},
    pages={198--308},
    note={\url{http://www.tac.mta.ca/tac/volumes/9/n10/9-10abs.html}}
}

\bib{GoJa}{book}{
    author={Goerss, Paul G.},
    author={Jardine, John F.},
    title={Simplicial homotopy theory},
    series={Modern Birkh{\"a}user Classics},
    date={2009},
    publisher={Birkh{\"a}user, Basel},
    note={DOI \href{https://doi.org/10.1007/978-3-0346-0189-4}{10.1007/978-3-0346-0189-4}}
}

\bib{May}{book}{
    author={May, J. Peter},
    title={Simplicial objects in algebraic topology},
    date={1992},
    publisher={University of Chicago Press, Chicago},
}

\bib{Dusko}{article}{
 author={Pavlovi{\'c}, Du{\v{s}}ko},
 title={Categorical interpolation: Descent and the Beck-Chevalley condition without direct images},
 conference={
  title = {Category Theory},
  address = {Como, Italy}, 
  date = {1990}
 },
 book={
  series={Lecture Notes in Math.},
  volume={1488},
  publisher={Springer-Verlag},
  address={Berlin and New York},
  date={1991}
 },
 pages={306--325},
 doi={\href{https://doi.org/10.1007/BFb0084229}{10.1007/BFb0084229}}
}

\bib{Shehtman}{misc}{
    author={Shehtman, Valentin},
    note={Private communication with M. Jibladze, Tbilisi, December 2022}
}

\bib{Street}{article}{
    author={Street, Ross},
    title={The algebra of oriented simplexes},
    journal={Journal of Pure and Applied Algebra},
    volume={49},
    date={1987},
    pages={283--335},
    doi={\href{https://doi.org/10.1016/0022-4049(87)90137-X}{10.1016/0022-4049(87)90137-X}},
}

\end{biblist}
\end{bibdiv}

\end{document}